\begin{document}

\newtheorem{thm}{Theorem}
\newtheorem{lem}[thm]{Lemma}
\newtheorem{claim}[thm]{Claim}
\newtheorem{cor}[thm]{Corollary}
\newtheorem{prop}[thm]{Proposition} 
\newtheorem{definition}{Definition}
\newtheorem{question}[thm]{Open Question}
\newtheorem{conj}[thm]{Conjecture}
\newtheorem{prob}{Problem}
\def\vol {{\mathrm{vol\,}}}
\def\squareforqed{\hbox{\rlap{$\sqcap$}$\sqcup$}}
\def\qed{\ifmmode\squareforqed\else{\unskip\nobreak\hfil
\penalty50\hskip1em\null\nobreak\hfil\squareforqed
\parfillskip=0pt\finalhyphendemerits=0\endgraf}\fi}

\def\cA{{\mathcal A}}
\def\cB{{\mathcal B}}
\def\cC{{\mathcal C}}
\def\cD{{\mathcal D}}
\def\cE{{\mathcal E}}
\def\cF{{\mathcal F}}
\def\cG{{\mathcal G}}
\def\cH{{\mathcal H}}
\def\cI{{\mathcal I}}
\def\cJ{{\mathcal J}}
\def\cK{{\mathcal K}}
\def\cL{{\mathcal L}}
\def\cM{{\mathcal M}}
\def\cN{{\mathcal N}}
\def\cO{{\mathcal O}}
\def\cP{{\mathcal P}}
\def\cQ{{\mathcal Q}}
\def\cR{{\mathcal R}}
\def\cS{{\mathcal S}}
\def\cT{{\mathcal T}}
\def\cU{{\mathcal U}}
\def\cV{{\mathcal V}}
\def\cW{{\mathcal W}}
\def\cX{{\mathcal X}}
\def\cY{{\mathcal Y}}
\def\cZ{{\mathcal Z}}

\def\NmQR{N(m;Q,R)}
\def\VmQR{\cV(m;Q,R)}

\def\Xm{\cX_m}

\def \A {{\mathbb A}}
\def \B {{\mathbb A}}
\def \C {{\mathbb C}}
\def \F {{\mathbb F}}
\def \G {{\mathbb G}}
\def \L {{\mathbb L}}
\def \K {{\mathbb K}}
\def \Q {{\mathbb Q}}
\def \R {{\mathbb R}}
\def \Z {{\mathbb Z}}
\def \fS{\mathfrak S}

\def\\{\cr}
\def\({\left(}
\def\){\right)}
\def\fl#1{\left\lfloor#1\right\rfloor}
\def\rf#1{\left\lceil#1\right\rceil}

\def\Tr{{\mathrm{Tr}}}
\def\Im{{\mathrm{Im}}}

\def \bFp {\overline \F_p}

\newcommand{\pfrac}[2]{{\left(\frac{#1}{#2}\right)}}

\def \Prob{{\mathrm {}}}
\def\e{\mathbf{e}}
\def\ep{{\mathbf{\,e}}_p}
\def\epp{{\mathbf{\,e}}_{p^2}}
\def\em{{\mathbf{\,e}}_m}

\def\Res{\mathrm{Res}}
\def\Orb{\mathrm{Orb}}

\def\vec#1{\mathbf{#1}}
\def\flp#1{{\left\langle#1\right\rangle}_p}

\def\mand{\qquad\mbox{and}\qquad}

\newcommand{\commI}[1]{\marginpar{%
    \vskip-\baselineskip %raise the marginpar a bit
    \raggedright\footnotesize
    \itshape\hrule\smallskip\begin{color}{red}#1\end{color}\par\smallskip\hrule}}

\newcommand{\commO}[1]{\marginpar{%
    \vskip-\baselineskip %raise the marginpar a bit
    \raggedright\footnotesize
    \itshape\hrule\smallskip\begin{color}{blue}#1\end{color}\par\smallskip\hrule}}

%\title{Polynomial Orbits in Subfields and 
%Affine Subspaces of Finite Fields}

\title[Polynomial Values in Subfields and 
Affine Subspaces]{Polynomial Values in Subfields and 
Affine Subspaces of Finite Fields}

\author[O. Roche-Newton]{Oliver Roche-Newton} 
\address{Johann Radon Institute for Computational and Applied Mathematics, Austrian Academy of Sciences, 4040 Linz, Austria}
\email{o.rochenewton@gmail.com}

\author[I. E. Shparlinski]{Igor E. Shparlinski} 
\address{School of Mathematics and Statistics, University of New South Wales, 
Sydney, NSW 2052, Australia}
\email{igor.shparlinski@unsw.edu.au}

\date{\today}

\begin{abstract} For an integer $r$, a prime power  $q$, 
and a polynomial $f$ over a finite field 
$\F_{q^r}$ of $q^r$ elements,  we obtain an 
upper bound on the frequency of elements in 
an orbit generated by iterations of $f$ which fall in 
a proper subfield of $\F_{q^r}$. We also obtain similar 
results for elements in affine subspaces of  $\F_{q^r}$, 
considered as a linear space over $\F_q$.
\end{abstract}

%% \paragraph{Mathematical Subject Classification:}  
\subjclass[2010]{11T06, 37P05, 37P55}

\keywords{finite fields, polynomial dynamics, orbits}

\maketitle

%---------------------------------------------------------------------
\section{Introduction}

 \subsection{Background}

For a prime power  $q$ and an integer $r>1$ we
consider finite fields $\K = \F_q$ and $\F = \F_{q^r}$ of 
$q$ and $q^r$ elements, respectively. 

The motivation behind this work comes from some questions
of polynomial dynamics, however to address these questions 
we first obtain new results which fall in the domain of 
additive combinatorics as are of independent interest.

More precisely,  given a polynomial $f\in \F[X]$ and an 
element $u \in \F$, we define the orbit
\begin{equation}
\label{eq:Orbu}
\Orb_{f} (u)= \{f^{(n)}(u)\ : \ n = 0, 1,\ldots\},
\end{equation}
where $f^{(n)}$ is the $n$th iterate of $f$, that is,
$$
f^{(0)}=X,\quad f^{(n)}=f(f^{(n-1)}),\quad n\ge 1.
$$ 
Here we consider the question about the frequency 
of elements in orbits $\Orb_{f} (u)$ that fall in the 
proper subfield $\K \subseteq \F$. Our first result is 
based on some combinatorial argument and shows that unless 
some iterate $f^{(s)}$ of $f$ is defined over $\F$
(for a rather small $s$) then the frequency of this event 
is low.

Furthermore, we also study the frequency of orbit elements that fall in 
an affine subspace of $\F$ considered as a linear vector space over $\K$.
This question is motivated by a recent work of 
Silverman and Viray~\cite{SiVi} (in characteristic zero 
and using a very different technique), see 
also~\cite{AKNTVV}. Using new results from additive combinatorics 
we obtain a lower bound on the 
dimension of an affine space 
that may contain $N$ consecutive elements in an orbit. This result 
may also be considered as an analogue of the results on 
the diameters of polynomial orbits in prime fields $\F_p$,
see~\cite{Chang1,Chang2,CCGHSZ,CGOS,GuShp}. More precisely, 
in~\cite{Chang1,Chang2,CCGHSZ,CGOS,GuShp} various lower bounds are 
given on the length $H$ of the shortest interval $[a+1, a+H]$ 
that contains residues modulo $p$ of the $N$ consecutive iterations 
$f^{(n)}(u)$, $n =0, \ldots, N-1$.

%This is a finite field analogue of 
%a recent result of Zieve~\cite{Zieve}, who studied a similar 
%question for polynomial orbits over fields of 
%characteristic zero (where both the technique used 
%and actually the result itself are of different nature).

There are also related results where bounds on the size of the 
intersection $\#\(f(\cA) \cap \cB\)$ are given, 
where $\cA$ and $\cB$ are some `interesting' sets and
$f(\cA) = \{f(a)~:~a \in \cA\}$ is the value set 
of a polynomial $f$ on $\cA$; see~\cite{CCGHSZ,CGOS} for the 
case when both sets $\cA$ and $\cB$ are intervals of consecutive
integers and~\cite{GomShp,Shp2}  for the case when $\cA$ is such 
an interval and $\cB$ is a multiplicative subgroup of $\F_p$. 
Unfortunately in the very interesting case when both sets 
$\cA$ and $\cB$ are subgroups of $\F_q$ no results are known. 
We also note that bounds on $\#\(f(\cA) \cap \cB\)$ for intervals 
$\cA$ and $\cB$ play an important role in the analysis of some
algorithms~\cite{G-MRST}. 

Finally, we also mention that the intersection 
of $\cL^{-1} \cap \cM$ for two linear subspaces of $\F$ over $\K$ 
has been studied by Mattarei~\cite{Matt1,Matt2} by using different methods
(certainly $0$ should be discarded from $\cL$ in the definition 
of $\cL^{-1}$).
It is quite likely that the methods of additive combinatorics 
can be applied to this question as well. 

 \subsection{Our results}
 \label{sec:results}
 
Here we consider affine 
subspaces in high degree extensions of finite fields as natural 
analogues of intervals. To obtain results about orbits in 
affine subspaces of $\F$,  we extend  
a result of Bukh and Tsimerman~\cite{BuTs} on a polynomial 
version of the celebrated sum-product theorem
to the case of arbitrary finite fields. 

Recall that 
 Bukh and Tsimerman~\cite[Theorem~1]{BuTs} give a lower bound on 
$ \max \{\#(\cA+\cA), \#\(f(\cA)+f(\cA)\)\}$ for subsets $\cA$ of 
prime fields. Their technique can be extended to arbitrary fields.
However, motivated by our application we obtain a result of this
kind with multifold sums of the set $\cA$, see Theorem~\ref{thm:BT-arbF} 
below, which in turn gives stronger versions of our principal results. 
We believe that this result and some technical innovations 
in its proof can be 
of independent interest as well and may have several other applications. 
For example, using this result 
we derive an upper bound on the intersection 
 $\#\(\A \cap f(\A)\)$
of an affine subspace $\A$ of $\F$  over $\K$
and its polynomial image $f(\A)$, see Theorem~\ref{thm:PolySubspace}
below.

Furthermore,  let us define the dimension $\dim \cS$ of a set $\cS \subseteq \F$
as the smallest dimension of all affine subspaces of $\F$  over $\K$
that contain $\cS$. 
In Corollary~\ref{cor:PolyDim} we obtain 
a lower bound on $\dim f(\A)$ for an affine subspace $\A$ of $\F$  over $\K$.
 Questions 
of this type sometimes appear in theoretical computer science, see~\cite{B-SG,B-SK}
and references therein. 
Note that some results of this kind for very special affine subspaces are also
given in~\cite{CillShp}. 

Finally, as we have mentioned, we apply these results to achieve our 
main goal: bound on the frequency of polynomial orbits in subspaces.

\subsection{Notation}
Throughout the paper, any implied constants in the symbols $O$,
$\ll$  and $\gg$ may depend on $\deg f$. 
We recall that the
notations $U = O(V)$, $U \ll V$ and  $V \gg U$ are all equivalent to the
statement that the inequality $|U| \le c V$ holds with some
constant $c> 0$.

\section{Polynomial Version of Sum-Product Estimates}

\subsection{Preparations}

We now obtain a version of the result of Bukh and  Tsimerman~\cite[Theorem~1]{BuTs}
for polynomials over arbitrary finite fields.  

As usual given $m$ sets $\cA_1, \ldots, \cA_m
\subseteq \F$ and a polynomial
$$F(X_1, \ldots, X_m) \in
\F[X_1, \ldots, X_m], 
$$ 
we define the set
$$
F(\cA_1, \ldots, \cA_m) =\{F(a_1, \ldots, a_m) \ : \ (a_1, \ldots,  a_m)
\in \(\cA_1 \times \ldots \times \cA_m\)\}. 
$$

In particular, $\cA + \cA$ and $\cA \cdot \cA$ are the sum set and 
the product set of $\cA$, respectively.  

We note that in our version of~\cite[Theorem~1]{BuTs} we use the 
set  $\cA+\cA+\cA+\cA-\cA-\cA-\cA-\cA$ instead of $\cA+\cA$ and $f(\cA)-f(\cA)$ instead of 
$f(\cA)+f(\cA)$, which leads for stronger expansion factor and is more suitable 
for our applications. 

Throughout, the  notation $\cA:\cB$ is used to denote 
the ratio set (assuming that $\cB \subseteq \F^*$).
Furthermore, we need the idea of  a {\it restricted ratio set\/}.
Namely, if  $\cE\subseteq \cA\times \cB$, then the ratio set of $\cA$ and $\cB$ restricted to $\cE$ is the set
$$
\cA\stackrel{\cE}{:}\cB=\{a/b~:~(a,b)\in \cE\}.
$$

The following result is a small modification of~\cite[Theorem~1.4]{LiR-N}. The 
necessary sum-ratio estimate is mentioned (without a proof) in~\cite{LiR-N}; 
a full proof can be found in~\cite{R-N}.

\begin{lem} \label{lem:SP Gen} Let $\cA$ be a subset of $\F$ with the property that for any subfield $\G$, and any $a \in \F$,
$$ \# (\cA\cap a\G)\leq \max{\left\{(\#\G)^{1/2},\frac{\#\cA}{8}\right\}}.$$
Then either,
$$\(\#(\cA:\cA)\)^4\(\#(\cA+\cA+\cA+\cA)\)^5\gg (\#\cA)^{10}
$$
or
$$\(\#(\cA:\cA)\)^5\(\#(\cA+\cA+\cA+\cA)\)^4\gg (\#\cA)^{10}.
$$
\end{lem}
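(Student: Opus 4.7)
The plan is to follow the strategy of \cite{LiR-N}, Theorem~1.4, adapted in two ways: (i) track the fourfold sumset $\cA+\cA+\cA+\cA$ in place of $\cA+\cA$ throughout, and (ii) invoke the sum--ratio estimate whose full proof appears in \cite{R-N} in a form that naturally produces a $4$-fold sum. The non-concentration assumption on $\cA$ is exactly the hypothesis that permits a finite-field incidence bound of Szemer\'edi--Trotter type to be used over $\F$ without a subfield obstruction: the condition $\#(\cA\cap a\G)\leq \max\{(\#\G)^{1/2},\#\cA/8\}$ precludes the point set we will construct from being concentrated on an affine line over a proper subfield (the only source of extra incidences over a general finite field).

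For the main incidence step, I would proceed in Elekes style. Consider the point set $P\subseteq \F\times \F$ whose horizontal coordinates lie in $\cA+\cA+\cA+\cA$ and whose vertical coordinates lie in $\cA:\cA$, together with a carefully chosen family of lines built from pairs of elements of $\cA$: each line $L_{a,b}$ of the form $y=(x-a)/b$ carries at least $\#\cA$ incidences coming from substitutions $x=a+c$, $c\in\cA$. Summing over $(a,b)\in\cA\times\cA$ gives at least $(\#\cA)^{3}$ incidences on $(\#\cA)^{2}$ lines with points in a set of size at most $\#(\cA+\cA+\cA+\cA)\cdot\#(\cA:\cA)$. Applying the finite-field incidence bound (which, by the hypothesis, can be used with essentially the Euclidean exponent) yields an inequality of the shape
\[
(\#\cA)^{3}\ll \bigl(\#(\cA+\cA+\cA+\cA)\cdot\#(\cA:\cA)\bigr)^{2/3}(\#\cA)^{4/3}+\text{lower order},
\]
and after a Plünnecke--Ruzsa rearrangement and possibly one round of dyadic pigeonholing this yields a sum--ratio inequality of the form needed.

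To obtain the two alternative conclusions, I would combine the above with a multiplicative Plünnecke estimate (applied to $\cA:\cA$) and a second application of the incidence machinery using a \emph{restricted} ratio set $\cA\stackrel{\cE}{:}\cA$, where $\cE$ is the popular-support set from the first step. The point of working with the restricted ratio set is that, in the argument of \cite{LiR-N,R-N}, one either finds many collinear incidences (giving the bound with exponents $(4,5)$ on $(\#(\cA:\cA),\#(\cA+\cA+\cA+\cA))$) or one finds that the restricted ratio set still has nearly full size (giving the bound with exponents $(5,4)$). Balancing these two scenarios produces exactly the dichotomy stated in the lemma.

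The main obstacle will be the verification that the finite-field Szemer\'edi--Trotter bound can be applied cleanly: one has to rule out the scenario in which a positive proportion of points lies on a single line defined over a proper subfield of $\F$. This is precisely where the quantitative hypothesis $\#(\cA\cap a\G)\leq \max\{(\#\G)^{1/2},\#\cA/8\}$ enters. The remaining technicalities are bookkeeping: carefully apportioning the incidences between the main term and the error terms of the incidence theorem, controlling the loss in the dyadic pigeonhole step, and converting the $2$-fold sum that appears naturally in the Elekes argument into the $4$-fold sum $\cA+\cA+\cA+\cA$ by a Plünnecke--Ruzsa inequality, so that no intermediate exponent is lost.
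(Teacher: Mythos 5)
You should first be aware that the paper does not actually prove this lemma: it is quoted as a small modification of Theorem~1.4 of \cite{LiR-N}, with the required sum--ratio estimate proved in full in \cite{R-N}, so there is no in-paper argument to match yours against. Your sketch attempts to reprove the result, and its central step is a genuine gap. You propose an Elekes-type configuration and then invoke ``the finite-field incidence bound (which, by the hypothesis, can be used with essentially the Euclidean exponent)''. No Szemer\'edi--Trotter theorem with the Euclidean exponent is available for small point and line sets over a general finite field $\F=\F_{q^r}$ (nor even over $\F_p$); the results of Bourgain--Katz--Tao type that do exist give only an unquantified $\varepsilon$-improvement over the Cauchy--Schwarz bound $I(P,L)\ll \#L\,(\#P)^{1/2}+\#P$, which is nowhere near strong enough to produce the explicit exponents $4$, $5$, $10$ of the lemma. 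Moreover, the hypothesis $\#(\cA\cap a\G)\le\max\{(\#\G)^{1/2},\#\cA/8\}$ is a non-concentration condition on $\cA$ itself; it does not transfer to your point set $P\subseteq(\cA+\cA+\cA+\cA)\times(\cA:\cA)$, and the claim that concentration on a line defined over a proper subfield is ``the only source of extra incidences'' over a general finite field is not a known theorem. So the key analytic input of your argument does not exist in the form you need it.

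The proofs in \cite{LiR-N} and \cite{R-N} avoid incidence geometry altogether: they run the Katz--Shen covering method (iterated Pl\"unnecke--Ruzsa-type covering of popular fibres of the ratio map), and the subfield hypothesis enters to guarantee that a relevant quotient set is not trapped in a dilate of a subfield, so that a step of the form ``$\#(\cA+\xi\cA)$ is large for some $\xi$'' can be carried out. The fourfold sum $\cA+\cA+\cA+\cA$ appears because the covering argument genuinely produces it, not because a twofold sum is later inflated; in fact the Pl\"unnecke--Ruzsa conversion you describe goes the wrong way (the trivial inequality $\#(\cA+\cA+\cA+\cA)\ge\#(\cA+\cA)$ would make it unnecessary if you really had a twofold bound, while bounding the fourfold sum by the twofold one costs exponents). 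As written, your proposal would not yield the stated inequalities; to repair it you would need to replace the incidence step with the covering-type argument of the cited references.
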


We also define 
$$
\cA\stackrel{\cE}-\cB=\{a-b~:~(a,b)\in \cE\}.
$$
Furthermore, we extend in a natural way the definition of 
the sum set $\cA + \cB$ and difference set $\cA  - \cB$
to subsets of an arbitrary group, written additively.

%We also require the following version 
%of the Balog-Szemer\'{e}di-Gowers Theorem 
%(which is~\cite[Lemma~2.2]{BouGar}):

We also require  a version 
of the Balog-Szemer\'{e}di-Gowers Theorem. 
The following result is essentially given in~\cite[Lemma~2.2]{BouGar}. Note that in the statement of \cite[Lemma~2.2]{BouGar} it is assumed that the group in question is the additive group of the prime field $\F_p$, however, one can verify that the same proof works for an arbitrary group $\cG$.

\begin{lem} 
\label{lem:BSG}
Let $\cG$ be an arbitrary group, written additively, and let $\cU,\cV\subseteq \cG$. 
Let $\cE\subseteq {\cU\times{\cV}}$ such that 
$$\#\cE\geq{\frac{\#\cU\#\cV}{K}}.
$$
for some real $K\ge 1$. Then there exists a subset $\cU_0\subseteq\cU$ 
such that 
$$\#\cU_0\geq{\frac{\#\cU}{10K}} \mand 
\#\(\cU\stackrel{\cE}{-}\cV\)^4\geq{\frac{\#(\cU_0-\cU_0)\#\cU(\#\cV)^2}{10^4K^5}}.$$
\end{lem}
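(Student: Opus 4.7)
The plan is to adapt the standard Balog--Szemer\'edi--Gowers path-counting argument (as in~\cite[Lemma~2.2]{BouGar}) to the two-set, restricted-difference setting. As observed in the statement, the argument uses only additive-group structure, so it transfers verbatim from $\F_p$ to a general $\cG$. Throughout, write $D = \cU \stackrel{\cE}{-} \cV$ and, for $u \in \cU$, $\cV(u) = \{v \in \cV : (u, v) \in \cE\}$, with analogous notation $\cU(v)$ and $\cU_0(v)$.

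First I would extract a popular subset by setting $\cU_0 = \{u \in \cU : \#\cV(u) \geq \#\cV/(10K)\}$. Vertices outside $\cU_0$ account for at most $\#\cU \cdot \#\cV/(10K) \leq \#\cE/10$ edges, so $\#\cE_0 := \sum_{u \in \cU_0}\#\cV(u) \geq 9\#\cE/10 \geq 9\#\cU\#\cV/(10K)$. Dividing by $\#\cV$ gives $\#\cU_0 \geq 9\#\cU/(10K) \geq \#\cU/(10K)$, establishing the first conclusion.

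Next I would count $T$, the number of 5-tuples $(u_1, v_1, u', v_2, u_2) \in \cU_0 \times \cV \times \cU \times \cV \times \cU_0$ with all four pairs $(u_1, v_1), (u', v_1), (u', v_2), (u_2, v_2) \in \cE$ --- that is, length-four bipartite paths from $\cU_0$ to $\cU_0$. Grouping by the middle vertex, $T = \sum_{u' \in \cU} S(u')^2$ with $S(u') = \sum_{v \in \cV(u')}\#\cU_0(v)$. Two applications of Cauchy--Schwarz --- first $\sum_{u'}S(u')^2 \geq (\sum_{u'}S(u'))^2/\#\cU$, and then, noting $\sum_{u'}S(u') = \sum_v \#\cU(v)\#\cU_0(v) \geq \sum_v \#\cU_0(v)^2 \geq (\#\cE_0)^2/\#\cV$ --- produce the lower bound $T \gg \#\cU^3(\#\cV)^2/K^4$. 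On the other hand, each 5-tuple yields the identity $u_1-u_2 = (u_1-v_1) - (u'-v_1) + (u'-v_2) - (u_2-v_2)$, expressing $\delta := u_1 - u_2 \in \cU_0 - \cU_0$ as a signed sum of four elements of $D$. Sorting 5-tuples by $\delta$ gives $T \leq \sum_\delta r_{\cU_0}(\delta) r_{4D}(\delta)$, where $r_{\cU_0}(\delta)$ counts representations of $\delta$ as a difference in $\cU_0^2$ and $r_{4D}(\delta)$ counts representations as signed combinations $d_1 - d_2 + d_3 - d_4$ from $D^4$, with total $\sum_\delta r_{4D}(\delta) = \#D^4$. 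Balancing the two multiplicities via a further popularity filter on the differences $\delta$ (the Cauchy--Schwarz/pigeonhole argument of~\cite[Lemma~2.2]{BouGar}) yields an upper bound of the form $T \ll K \#\cU^2 \#D^4 / \#(\cU_0 - \cU_0)$; combined with the lower bound on $T$, this produces the desired $\#(\cU_0 - \cU_0) \leq 10^4 K^5 \#D^4 / (\#\cU \#\cV^2)$.

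The main obstacle will be this last balancing step: the naive bound $T \leq \#\cU_0 \cdot \#D^4$ (via the injective map $(u_1, v_1, u', v_2, u_2) \mapsto (u_1, d_1, d_2, d_3, d_4)$ with $d_i \in D$) only yields a lower bound on $\#D$ rather than the required upper bound on $\#(\cU_0 - \cU_0)$. Recovering the correct exponent $K^5$ requires the more delicate popularity-on-differences argument of Bourgain--Garaev, whose proof uses only additive-group structure and therefore transfers without change to an arbitrary $\cG$.
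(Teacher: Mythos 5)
Your set-up is sound: the high-degree subset, the lower bound $T\gg \#\cU^3(\#\cV)^2/K^4$ for the number of length-four paths, and the identity $u_1-u_2=(u_1-v_1)-(u'-v_1)+(u'-v_2)-(u_2-v_2)$ placing each path injectively into $D^4$ are all correct, and deferring the hardest step to~\cite{BouGar} is consistent with the paper, which offers no proof at all beyond the citation and the remark that the argument is group-agnostic.

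The gap is in the bridge you propose for that hardest step. A ``popularity filter on the differences $\delta$'' cannot turn $T\le\sum_\delta r_{\cU_0}(\delta)\,r_{4D}(\delta)$ into an upper bound of the form $T\ll K\#\cU^2\#D^4/\#(\cU_0-\cU_0)$: differences $\delta\in\cU_0-\cU_0$ with few representations contribute fully to $\#(\cU_0-\cU_0)$ but negligibly to $T$, so the largeness of $T$ says nothing about them, and discarding unpopular $\delta$ shrinks the set you are trying to bound rather than the set $\cU_0$. If $\cU_0$ is the union of a structured piece generating almost all paths and a dissociated piece generating almost all differences, then $T$ and $\#(\cU_0-\cU_0)$ are simultaneously large and your claimed upper bound on $T$ fails, even though the lemma itself remains true (for a different choice of subset). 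The actual mechanism in~\cite{BouGar} --- and in every proof of Balog--Szemer\'edi--Gowers --- is a further refinement of the \emph{vertex set}, not of the differences: one passes to $\cU_0'\subseteq\cU_0$ with $\#\cU_0'\ge\#\cU/(10K)$ such that \emph{every} pair $u_1,u_2\in\cU_0'$ is joined by at least $\#\cU(\#\cV)^2/(10^4K^5)$ paths of length four (Gowers' random-neighbourhood and bad-pair-removal argument; this refinement is exactly where the exponent climbs from $K^4$ to $K^5$). Only then does each $\delta\in\cU_0'-\cU_0'$ individually admit that many representations in $D^4$, giving $\#(\cU_0'-\cU_0')\cdot\#\cU(\#\cV)^2/(10^4K^5)\le\#D^4$; in particular, the subset in the conclusion of the lemma must be this $\cU_0'$, not your high-degree set. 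That refinement is the genuinely nontrivial content whose group-independence needs to be verified (it does hold, since it uses only the bipartite graph and the cancellation identity), and your write-up replaces it with a step that does not work.
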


Finally, we use a form of the Pl\"{u}nnecke-Ruzsa inequality 
which follows from~\cite[Theorem~1.1.1]{Ruzsa} (see also~\cite[Lemma~9]{BuTs}).

\begin{lem} 
\label{lem:PR}
Let $\cG$ be an arbitrary group, written additively, and let $\cU\subseteq \cG$. 
Then 
$$
\#\(\cU+\cU -\cU - \cU\)^4\le \(\frac{\#(\cU-\cU)}{\#\cU}\)^4 \#\cU.$$
\end{lem}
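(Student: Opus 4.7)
The plan is to derive the lemma as the $m = n = 2$ instance of the Plünnecke--Ruzsa inequality for iterated difference sets. Writing $K := \#(\cU-\cU)/\#\cU$, the statement reduces to $\#(\cU+\cU-\cU-\cU) \le K^4\,\#\cU$, which is a special case of the general asymmetric bound $\#(m\cU - n\cU) \le K^{m+n}\,\#\cU$ valid in any abelian group; equivalently, letting $\cV := \cU - \cU$, the inequality is a bound on $\#(\cV + \cV)$ in terms of $\#\cV$ and $\#\cU$.

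The approach I would take follows Petridis's streamlining of Ruzsa's original argument. The central tool is Ruzsa's triangle inequality
$$
\#A \cdot \#(B-C) \le \#(A-B) \cdot \#(A-C),
$$
proved by the injection $A \hookrightarrow (A-B) \times (A-C)$, $a \mapsto (a-b,\, a-c)$, for each fixed representation $b - c \in B - C$. One chooses a non-empty $\cU_0 \subseteq \cU$ minimizing the ratio $\#(\cU_0 - \cU)/\#\cU_0 =: K_0 \le K$, and shows by induction on $k$ that $\#(\cU_0 - k\cU) \le K_0^{\,k}\,\#\cU_0$; the minimality of $\cU_0$ is precisely what prevents the doubling from blowing up at each inductive step. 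A symmetric application then delivers $\#(m\cU - n\cU) \le K^{m+n}\,\#\cU$ after one final triangle inequality transfers the estimate from $\cU_0$ back to $\cU$. Specialising to $m = n = 2$ yields the desired bound $\#(\cU + \cU - \cU - \cU) \le K^4\,\#\cU$. For the form stated in the lemma one simply invokes \cite[Theorem~1.1.1]{Ruzsa}.

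The only technical point worth flagging is that most standard presentations of Plünnecke--Ruzsa are phrased in terms of the sum-set doubling $\#(\cU+\cU)/\#\cU$, whereas here doubling is measured through the difference set. In any abelian group these two quantities are equivalent up to bounded powers (once more via the triangle inequality), and the whole argument above can be run with minuses in place of pluses throughout, preserving the exponent $K^4$ without loss. Since the only applications of interest take place in $(\F,+)$, which is abelian, no further verification is required, and this is the main reason the quoted form of \cite[Theorem~1.1.1]{Ruzsa} slots directly into our setting.
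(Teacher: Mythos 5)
Your argument is correct and is essentially the route the paper takes: the lemma is exactly Ruzsa's Theorem~1.1.1 applied with $A=\cU$ and $B=-\cU$ (so that doubling is measured by the difference set and the exponent $K^4$ is preserved, precisely the point you flag), and the paper simply cites that result while you additionally supply the standard Ruzsa/Petridis proof sketch. Two minor remarks: the exponent $4$ on the left-hand side of the displayed inequality is a misprint --- the intended form, and the one actually used later in the paper, is $\#(\cU+\cU-\cU-\cU)\le\bigl(\#(\cU-\cU)/\#\cU\bigr)^4\,\#\cU$, which is what you prove --- and, as you correctly observe, the argument needs commutativity, which is consistent with the paper's applications of the lemma in abelian groups only.
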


\subsection{Main result}

Let $p$ be the characteristic of  $\F = \F_{q^r}$.

\begin{thm}
\label{thm:BT-arbF}
Let $\cA$ be a subset of $\F$ with the property that, for any subfield $\G \subseteq \F$ and 
any $a\in \F$,
$$\#\((\cA-\cA)\cap a \G \)\leq
 \max{\left\{(\#\G)^{1/2},\frac{\#\cA^{1-\vartheta_d}}{8}\right\}} .$$
Then, for any polynomial $f\in \F [X]$ of degree $d = \deg f$ 
with $p> d \ge 2$ we have
\begin{equation*}
\begin{split}
\max \{\#(\cA+\cA+\cA+\cA-\cA-\cA-\cA-\cA),&\#\(f(\cA)-f(\cA)\)\}\\
& \qquad  \geq c_d(\# \cA)^{1+\eta_d},
\end{split}
\end{equation*}
%where  $\eta_2 = 1/69$, $\vartheta_2 = 0$ and  
where  $\eta_2=\vartheta_2 =  1/69$, and  
$c_2 =  c$ for some absolute constant $c>0$
and then 
$$
\eta_d = \frac{\eta_{d-1}}{5+\eta_{d-1}}, \quad 
\vartheta_d = \vartheta_{d-1} +  \eta_d - \vartheta_{d-1} \eta_d,
\quad c_d= \(\frac{c_{d-1}}{d^3}\)^{1/(5+\eta_{d-1})}
$$ 
for $d \ge 3$. 
\end{thm}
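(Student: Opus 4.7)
The plan is to prove Theorem~\ref{thm:BT-arbF} by induction on the degree $d=\deg f$, adapting the strategy of Bukh and Tsimerman~\cite[Theorem~1]{BuTs} with three modifications: using the set $\cA+\cA+\cA+\cA-\cA-\cA-\cA-\cA$ in place of $\cA+\cA$, the difference $f(\cA)-f(\cA)$ in place of $f(\cA)+f(\cA)$, and the subfield non-concentration hypothesis stated here in place of their prime-field restriction. Each of the three lemmas above plays a role: Lemma~\ref{lem:SP Gen} extracts sum--ratio expansion, Lemma~\ref{lem:BSG} upgrades a partial additive or multiplicative coincidence to a popular subset, and Lemma~\ref{lem:PR} transfers sumset information between $\cA$ and any large subset of $\cA$.

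For the base case $d=2$, I would first normalise so that $f(X)=X^{2}$, which is harmless because replacing $\cA$ by $\alpha\cA+\beta$ preserves $\cA-\cA$ up to a constant factor and hence preserves the non-concentration hypothesis. The identity
$$
a^{2}-b^{2}=(a-b)(a+b)
$$
then exhibits $f(\cA)-f(\cA)$ as a restricted product of $\cA-\cA$ and $\cA+\cA$, taken over pairs $(a-b,a+b)$ with $(a,b)\in\cA\times\cA$. Suppose both $\#(f(\cA)-f(\cA))$ and $\#(\cA+\cA+\cA+\cA-\cA-\cA-\cA-\cA)$ are at most $M$. A Cauchy--Schwarz count yields many collisions $(a-b)(a+b)=(c-d)(c+d)$, which is a multiplicative energy statement for the pair of sets $\cA-\cA,\cA+\cA$. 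Applying Lemma~\ref{lem:BSG} in the multiplicative group $\F^{*}$ and then a popularity argument, I descend to a subset $\cA'\subseteq\cA$ of size $\gg\#\cA$ whose ratio set $\cA':\cA'$ has size bounded polynomially in $M$. The non-concentration hypothesis transfers to $\cA'$, so Lemma~\ref{lem:SP Gen} forces either $\#(\cA':\cA')$ or $\#(\cA'+\cA'+\cA'+\cA')$ to be large; combining with Lemma~\ref{lem:PR} to pass from sumsets of $\cA'$ back to sumsets of $\cA$, both branches contradict $M<c(\#\cA)^{1+\eta_{2}}$ for a sufficiently small absolute constant $c$, giving $\eta_{2}=1/69$ after optimisation.

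For the inductive step $d\ge 3$, assume the theorem for degree $d-1$. Write $f(X)-f(Y)=(X-Y)\Phi(X,Y)$ where $\Phi\in\F[X,Y]$ is symmetric and of degree $d-1$ in each variable; the assumption $p>d$ guarantees that the relevant leading coefficients remain nonzero. Smallness of $\#(f(\cA)-f(\cA))$ forces, via a popularity argument, the existence of a single $s\in\cA-\cA$ such that the translated intersection $\cA_{1}=\cA\cap(\cA+s)$ has size $\gg\#\cA$ and such that the values $\Phi(x,x-s)$ for $x\in\cA_{1}$ lie in a set whose difference set has size bounded by $M$ up to a controlled power loss. Since $g(X):=\Phi(X,X-s)$ has degree exactly $d-1$, the inductive hypothesis applied to $g$ on $\cA_{1}$, followed by Lemma~\ref{lem:PR} to return from $\cA_{1}+\cA_{1}+\cA_{1}+\cA_{1}-\cA_{1}-\cA_{1}-\cA_{1}-\cA_{1}$ to the same expression in $\cA$, yields the lower bound $M\gg c_{d}(\#\cA)^{1+\eta_{d}}$. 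The factor $1/(5+\eta_{d-1})$ in the recursion for $\eta_{d}$ emerges from this Pl\"unnecke--Ruzsa transfer, which compares the eightfold sum-and-difference set of $\cA$ against that of $\cA_{1}$.

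The principal obstacle is precisely the inductive step: verifying that $\cA_{1}$ still satisfies the subfield non-concentration hypothesis, now at the slightly worse exponent $1-\vartheta_{d}<1-\vartheta_{d-1}$. This is exactly why the hypothesis is imposed on $\cA-\cA$ rather than on $\cA$ itself, so that translates and popular subsets remain admissible, and why $\vartheta_{d}$ accumulates multiplicatively as $1-\vartheta_{d}=(1-\vartheta_{d-1})(1-\eta_{d})$. Carefully balancing the loss in the concentration exponent $\vartheta$ against the gain in the expansion exponent $\eta$ at each level of the induction is the bookkeeping that drives the stated recursions for $\eta_{d}$, $\vartheta_{d}$, and $c_{d}$.
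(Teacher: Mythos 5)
Your plan follows the paper's proof essentially step for step: the base case $d=2$ uses the factorization $f(x)-f(y)=(x-y)(x+y+b)$, the Balog--Szemer\'edi--Gowers lemma applied in the multiplicative group $\F^*$ to the pair $\cA-\cA$ and $(\cA+\cA+b)^{-1}$, then the sum--ratio estimate of Lemma~\ref{lem:SP Gen} together with Pl\"unnecke--Ruzsa, while the inductive step picks a popular difference $t$, passes to $\cB=\cA\cap(\cA-t)$ and the degree-$(d-1)$ polynomial $f(X+t)-f(X)$, and runs exactly the $\vartheta$-bookkeeping $1-\vartheta_d=(1-\vartheta_{d-1})(1-\eta_d)$ you describe. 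The one (immaterial) deviation is in the base case: the paper keeps the large set produced by BSG as a subset $\cA_0\subseteq\cA-\cA$ --- for which both the containment $\cA_0+\cA_0+\cA_0+\cA_0\subseteq\cA+\cA+\cA+\cA-\cA-\cA-\cA-\cA$ and the subfield non-concentration hypothesis transfer trivially --- rather than descending further to a subset of $\cA$ itself as you propose, a step that BSG does not directly supply and that the argument does not need.
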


\begin{proof} Let $\# \cA = M$. We define $\alpha, \beta, \gamma$ and $\xi$ 
by the relations: 
$$\#
(\cA+\cA)=\alpha M, \quad \#(\cA - \cA)=\beta M, \quad \#\(f(\cA)-f(\cA)\)= \xi M.
$$
and
$$\#(\cA+\cA+\cA+\cA-\cA-\cA-\cA-\cA)=\gamma M
$$ 

The proof uses induction on $d$. 

Consider first the base case $d=2$.
In this case the condition on the set $\cA$ is simply 
\begin{equation}
\label{eq:cond d=2}
\#\((\cA-\cA)\cap a \G \)\leq
 \max{\left\{(\#\G)^{1/2},\frac{(\#\cA)^{1-\eta_2}}{8}\right\}} .
\end{equation}

 Without loss of generality, we can assume that 
the polynomial $f$ is monic and with the zero constant coefficient, since the cardinality of $f(\cA)-f(\cA)$ does not vary under these changes to $f$. 
The polynomial $f$ can then be written as $f(x)=x^2+bx$, for some $b \in \F$. Note that, 
for any $x,y\in \cA$,
\begin{equation}
\label{eq:fact}
f(x)-f(y)=x^2+bx - y^2 - by =  (x-y)(x+y+b). 
\end{equation}

Next, define a set $\cE \subseteq(\cA-\cA)\times{(\cA+\cA+b)^{-1}}$ by the equation
$$
\cE=\{(x-y,(x+y+b)^{-1})~:~x,y\in \cA ,x+y+b\neq 0\}.$$

Note that for $p> 2$ each pair $(x,y)$ with $x,y\in \cA$ and  $x+y+b\neq 0$ 
leads to a different element of $\cE$. 
Hence 
\begin{equation}
\label{eq:size E} 
\# \cE \geq{M^2-M}\geq{\frac{M^2}{2}
 =\frac{\#\(\cA-\cA\)\# \(\cA+\cA+b\)}{2\alpha \beta}}.
\end{equation}

Moreover, by~\eqref{eq:fact} we have
\begin{equation}
\label{eq:include}
(\cA-\cA)\stackrel{\cE}{:}(\cA+\cA+b)^{-1}\subseteq f(\cA)-f(\cA),
\end{equation}
where we define
$$
(\cA+\cA+b)^{-1} = \{(x+y+b)^{-1}~:~x,y\in \cA ,x+y+b\neq 0\}.
$$ 

We now apply Lemma~\ref{lem:BSG} in this setting, with
with the group $\cG = \F^*$, the sets
%$\cU=\cA-\cA$ and $\cV=(\cA+\cA+b)^{-1}$, 
$$\cU=(\cA-\cA) \backslash \{0\} \mand \cV=(\cA+\cA+b)^{-1}, 
$$
and thus,
by~\eqref{eq:size E}, with $K = 2 \alpha \beta$. 
Without loss of generality we can assume that $\# \cA \ge 2$, 
so $\#\(\cA-\cA\)  \ge \# \cA \ge 2$, and thus
$$
\#\(\cA-\cA\)-1 \ge \frac{1}{2} \#\(\cA-\cA\).
$$
Hence there exists a subset $\cA_0\subseteq {\cA-\cA}$ such that
\begin{equation}
\label{eq:A0 large}
\# \cA_0\geq\frac{\#\(\cA-\cA\)-1}{20\alpha \beta}\ge 
\frac{\#\(\cA-\cA\)}{40\alpha \beta} =
\frac{M}{40\alpha}
\end{equation}
and
\begin{equation*}
\begin{split}
\#\((\cA-\cA)\stackrel{\cE}{:}(\cA+\cA+b)^{-1}\)^4
& \geq \frac{\#\(\cA_0:\cA_0\)\alpha^2 \beta M^3}
{10^4 (2 \alpha  \beta)^5}\\
& =  \frac{\#\(\cA_0:\cA_0\)  M^3}
{32\cdot 10^4  \alpha^3  \beta^4}.
\end{split}
\end{equation*}

Applying the upper bound on the restricted ratio set which comes 
from~\eqref{eq:include}, and simplifying, gives
$$\alpha^3  \beta^4 \xi^4 M \geq\frac{\#\(\cA_0:\cA_0\) } {32\cdot 10^4}.
$$

%If $\alpha > M^{\eta_2}/20$ 
If $\alpha > M^{\eta_2}/40$ 
there is nothing to prove. 
Otherwise we see from~\eqref{eq:A0 large} that $\# \cA_0\geq M^{1-\eta_2}$.
Note that it now follows from~\eqref{eq:cond d=2} that 
\begin{equation*}
\begin{split}
\#\( \cA_0\cap a \G \)& \leq
\#\((\cA-\cA)\cap a \G \)\\&
\leq
 \max{\left\{(\#\G)^{1/2},\frac{M^{1-\eta_2}}{8}\right\}}
\le 
 \max{\left\{(\#\G)^{1/2},\frac{\# \cA_0}{8}\right\}}   .
\end{split}
\end{equation*}
Therefore Lemma~\ref{lem:SP Gen} applies to $\cA_0$ and if interpreted as a lower bound for 
$\#\(\cA_0:\cA_0\)$ yields either
\begin{equation}
\(\#\(\cA_0+\cA_0+\cA_0+\cA_0\)\)^5 \alpha^{12}\beta^{16}\xi ^{16}M^4
\gg{\(\#\cA_0\)^{10}}\gg{\frac{M^{10}}{\alpha^{10}}},
\label{option1}
\end{equation}
or
\begin{equation}
\(\#\(\cA_0+\cA_0+\cA_0+\cA_0\)\)^4 \alpha^{15}\beta^{20}\xi ^{20}M^5
\gg{\(\#\cA_0\)^{10}}\gg{\frac{M^{10}}{\alpha^{10}}}.
\label{option2}
\end{equation}

Note that 
$$\cA_0+\cA_0+\cA_0+\cA_0\subseteq \cA+\cA+\cA+\cA-\cA-\cA-\cA-\cA,
$$ 
so that 
$\#(\cA_0+\cA_0+\cA_0+\cA_0)\leq \gamma M$. It is also straightforward to check that 
$\alpha, \beta \leq \gamma$. Putting this information into \eqref{option1}, we conclude that
\begin{equation}
\gamma^{43}\xi^{16}\gg M .
\label{case1}
\end{equation}
On the other hand, the  inequality~\eqref{option2} gives
\begin{equation}
\gamma^{49}\xi^{20}\gg{M}.
\label{case2}
\end{equation}
Since~\eqref{case1} also implies~\eqref{case2} it can be concluded that
\begin{equation*}
\begin{split}
\max \{\#(\cA+\cA+\cA+\cA-\cA-\cA-\cA-\cA),&\#\(f(\cA)-f(\cA)\)\}\\
& \qquad  \gg (\# \cA)^{1+\frac{1}{69}}. 
\end{split}
\end{equation*}
Taking a sufficiently small value of $c$ we obtain the 
desired result for $d=2$, which concludes the base case. 

Now assume that the result holds with $d-1$ instead of $d$. 

Let 
$$
r(t) = \# \{(x,y) \in \cA \times \cA~:~ t = x-y\}.
$$
Since 
$$
\sum_{t \in \cA-\cA} r(t)=M^2,
$$ 
it follows that there exists some $t \in \cA-\cA$ such that 
$$
r(t)\geq{\frac{M^2}{\#(\cA-\cA)}}=\frac{M}{\beta}.
$$
Define $\cB=\{a\in \cA~:~a+t\in \cA\}$, and so 
\begin{equation}
\label{eq:B large}
\# \cB\ge M/\beta.
\end{equation}

Now, if $\beta > M^{\eta_{d}}$ 
then there is nothing to prove. 
Otherwise 
we have 
\begin{equation}
\label{eq:B very large}
\#\cB \ge M/\beta \ge M^{1-\eta_d}.
\end{equation}
 
We now define a new polynomial $g(X)=f(X+t)-f(X)$, and note that $\deg g =d-1$ as $\deg f < p$. 
It is easy to check that $\cB$ satisfies the subfield intersection conditions. Indeed,
using~\eqref{eq:B very large} we derive
\begin{equation*}
\begin{split}
\#\((\cB-\cB)\cap a \G \)& \leq \#\((\cA-\cA)\cap a \G \) \le 
 \max{\left\{(\#\G)^{1/2},\frac{M^{1-\vartheta_d}}{8}\right\}} \\
& \le  \max{\left\{(\#\G)^{1/2},\frac{(\#\cB)^{(1-\vartheta_{d})/(1-\eta_d)}}{8} \right\}}\\
%& =  \max{\left\{(\#\G)^{1/2},\frac{\#\cB^{1-\vartheta_{d}} M^{\eta_d(1-\vartheta_{d})}}{8} \right\}}\\
%& \le  \max{\left\{(\#\G)^{1/2},\frac{\#\cB^{1-\vartheta_{d}+\eta_d}}{8} \right\}}\\
& =  \max{\left\{(\#\G)^{1/2},\frac{(\#\cB)^{1-\vartheta_{d-1}}}{8} \right\}}.
\end{split}
\end{equation*}
So the 
inductive hypothesis can be applied with $g$ and $\cB$. 
There are two possibilities; either
\begin{equation}
\label{eq:choice1}
\#\(g(\cB)-g(\cB)\) \ge c_{d-1} \(\# \cB\)^{1+\eta_{d-1}}
\end{equation}
or 
\begin{equation}
\label{eq:choice2}
\#\(\cB+\cB+\cB+\cB-\cB-\cB-\cB-\cB\) \ge c_{d-1} \(\# \cB\)^{1+\eta_{d-1}}.
\end{equation}

If~\eqref{eq:choice1} holds, we note that 
$g(\cB)-g(\cB) \subseteq f(\cA)+f(\cA)-f(\cA)-f(\cA)$. 
Also, by Lemma~\ref{lem:PR}, we have 
$$
\#\(f(\cA)+f(\cA)-f(\cA)-f(\cA)\)
\leq{\frac{\(\#\(f(\cA)-f(\cA)\)\)^4}{\(\#f(\cA)\)^3}}.
$$
So, using the trivial bound $\#f(\cA) \ge M/d$, we obtain
$$
c_{d-1} \(\# \cB\)^{1+\eta_{d-1}} \leq d^3 \xi ^4 M.
$$
Recalling~\eqref{eq:B large} we derive 
$$
c_{d-1}d^{-3} M^{\eta_{d-1}} \le \beta^{1+\eta_{d-1}}\xi^4.
$$
Therefore 
$$
\max\{\beta, \xi\} \ge \(c_{d-1}d^{-3} M^{\eta_{d-1}}\)^{1/(5+\eta_{d-1})}, 
$$
and so the desired result holds.

If~\eqref{eq:choice2} holds, then it follows from the fact that $\cB\subseteq \cA$ that
$$
c_{d-1}  \(\# \cB\)^{1+\eta_{d-1}} \leq  \gamma  M\leq{\gamma Md^3}.
$$
By applying~\eqref{eq:B large}, we derive 
$$
c_{d-1}d^{-3} M^{\eta_{d-1}} \le \beta^{1+\eta_{d-1}}\gamma. 
$$
Therefore, using the fact that $\gamma \ge \beta$ we obtain 
$$
\max\{\gamma, \xi\} \ge \gamma \ge \(c_{d-1}d^{-3} M^{\eta_{d-1}}\)^{1/(2+\eta_{d-1})}, 
$$
and so in this case  the desired result  holds as well.  This closes the induction
and concludes the proof. 
\end{proof}

It is easy to see that 
$$
\lim_{d\to \infty} \frac{\log \eta_d}{d} = - \log 5.
$$
Note that a similar result can also be obtained for the exponent of the
bound of  Bukh and  Tsimerman~\cite[Theorem~1]{BuTs} (instead of 
$16^{-1}\cdot 6^{-d}$). This in turn 
implies
that 
$$
\lim_{d\to \infty} \frac{\log c_d}{\log d} = - 3. %%\frac{3}{4}.
$$

We now recall the definition of the dimension $\dim \cS$ of a set $\cS \subseteq \F$
as the smallest dimension of all affine subspaces of $\F$  over $\K$
that contain $\cS$.

\begin{cor} \label{cor:PolyDim} Let $f\in \F[X]$ be of degree $d=\deg f$ 
with $p> d \ge 2$. Let  $\A \subseteq \F$
be an affine subspace of dimension $s$ over $\K$ 
such that for any subfield $\G \subseteq \F$ and any $a\in \F$,  we have
$$
\#\(\cL \cap a\G  \) \leq  \max{\left\{(\#\G)^{1/2},\frac{q^{s(1-\vartheta_d)}}{8}\right\}}, 
$$ 
where $\A = b + \cL$ for some $b\in \F$ and a linear subspace $\cL \subseteq \F$. 
Then 
$$
\dim f(\A) \ge (1+\eta_d + o(1)) \dim \A, 
$$
as $\# \A \to \infty$ where  $\eta_d$ and $\vartheta_d$ are as in
Theorem~\ref{thm:BT-arbF}. \end{cor}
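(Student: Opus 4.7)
The plan is to apply Theorem~\ref{thm:BT-arbF} directly to the set $\cA = \A$ and extract a dimension bound from the resulting sumset estimate. First I would verify that the hypothesis of Theorem~\ref{thm:BT-arbF} holds. Since $\A = b + \cL$ with $\cL$ a linear subspace, we have $\A - \A = \cL$ and $\#\A = q^s$. Thus the hypothesis of the corollary is literally the hypothesis of Theorem~\ref{thm:BT-arbF} with $\cA = \A$, with the $(\#\cA)^{1-\vartheta_d}$ bound matching $q^{s(1-\vartheta_d)}$.

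The crucial observation is that because $\cL$ is a \emph{linear} subspace (so $\cL + \cL = \cL$ and $-\cL = \cL$), the eight-fold sum-difference set collapses:
$$
\A+\A+\A+\A-\A-\A-\A-\A = (4b-4b) + \cL = \cL,
$$
so its cardinality is exactly $\#\A = q^s$. Applying Theorem~\ref{thm:BT-arbF} therefore gives
$$
\max\{\#\A,\,\#(f(\A)-f(\A))\} \ge c_d(\#\A)^{1+\eta_d},
$$
and since $\#\A = q^s$ tends to infinity, for $\#\A$ large enough the first term is too small, which forces
$$
\#(f(\A)-f(\A)) \ge c_d (\#\A)^{1+\eta_d}.
$$

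To convert this into a dimension bound, I would note that if $f(\A)$ lies in an affine subspace $\cM$ of dimension $t = \dim f(\A)$, then $f(\A)-f(\A) \subseteq \cM - \cM$, which is a linear subspace of $\F$ over $\K$ of dimension $t$. Hence $\#(f(\A)-f(\A)) \le q^t$, so
$$
q^t \ge c_d\, q^{s(1+\eta_d)},
\qquad
t \ge s(1+\eta_d) + \log_q c_d.
$$
Since $c_d$ depends only on $d$ (not on $q$ or $s$), the additive constant $\log_q c_d$ is bounded in absolute value by $|\log c_d|/\log 2$, which is $o(s)$ as $\#\A = q^s \to \infty$. Dividing by $s = \dim \A$ yields $\dim f(\A)/\dim \A \ge 1 + \eta_d + o(1)$, as required.

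I expect no serious obstacle: the entire argument is essentially a translation from Theorem~\ref{thm:BT-arbF}. The one step that requires a moment of thought is the collapse of the eight-fold sum-difference set to $\cL$, which is precisely the feature that made it worth formulating Theorem~\ref{thm:BT-arbF} with this particular combination instead of $\cA + \cA$ (as noted in the discussion preceding the theorem). Everything else is bookkeeping for the asymptotic regime $\#\A \to \infty$.
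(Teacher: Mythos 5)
Your proof is correct and follows the same route as the paper: the eight-fold sum-difference set collapses to $\cL$ because $\cL$ is linear, Theorem~\ref{thm:BT-arbF} then forces $\#\(f(\A)-f(\A)\) \ge c_d (\#\A)^{1+\eta_d}$, and comparing with $q^{\dim f(\A)}$ gives the dimension bound. The paper's proof is just a terser version of the same argument; your extra bookkeeping (discarding the $\#\A$ term in the max, absorbing $\log_q c_d$ into the $o(1)$) is exactly what the paper leaves implicit.
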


\begin{proof} Since we obviously have 
$\#(\A+\A+\A+\A-\A-\A-\A-\A) = \#\A$  and $\A - \A = \cL$, then   
Theorem~\ref{thm:BT-arbF} implies $\#\(f(\A)-f(\A)\)\} \geq c_d(\# \A)^{1+\eta_d}$
and the result follows. 
\end{proof}

\subsection{Some remarks on Theorem~\ref{thm:BT-arbF}}

Regarding the condition in Theorem~\ref{thm:BT-arbF}  that the degree of $f$ satisfies $d \leq p$, we note that some
condition is necessary in order to account for the possibility that our polynomial $f$ is additive. If $f$ has the property that $f(x+y)=f(x)+f(y)$ for all $x,y\in{\mathbb{L}}$, then we can take $\cA$ to be 
an affine subspace of $\F$ 
and observe that $\# (\cA+\cA), \#(f(\cA)+f(\cA)) \ll \# \cA$. For example, if $f(X)=X^p$, then $f$ is an additive polynomial, and this is why the inductive argument breaks down at this point.

The condition that $\cA-\cA$ does not have an overly large intersection with a dilate of a subfield is needed in order to apply the sum-product estimate from \cite{LiR-N}. Again, some condition of this kind is necessary, since it could be the case that $\cA=\G$ for some subfield $\G\subseteq \F$. Then, if the coefficients of $f$ are all taken from $\G$, we obviously have 
$$\cA+\cA+\cA+\cA-\cA-\cA-\cA-\cA, \ f(\cA)-f(\cA)\subseteq \cA,$$
and so the estimate in Theorem \ref{thm:BT-arbF} does not hold. It seems likely that the result holds under the cleaner condition that $\cA$ does not have an overly large intersection with any subfield. We 
note that if $\# \cA \geq  (\#\F)^{1/2}$, then this simplification of the condition can be obtained, since one does not need to worry about the subfield intersection conditions in the sum-product estimate for larger subsets of a finite field. Sum-product estimates for large subsets of a finite field can be found in ~\cite{Gar} and~\cite{Vinh}.

It remains an interesting and open problem to give a full classification of the polynomials $f$ and sets $\cA$  for which 
$$
\max \{\#(\cA+\cA),\#\(f(\cA)-f(\cA)\)\} = (\# \cA)^{1+o(1)}, 
$$
as $\# \cA \to \infty$. 
%which do not satisfy the inequality
%$$
%\max \{\#(\cA+\cA),\#\(f(\cA)-f(\cA)\)\} \gg (\# \cA)^{1+\varepsilon}.
%$$

\section{Distribution of Polynomial Orbits}

\subsection{Polynomials orbits in subfields} 

Clearly, for any $u \in \F$,  the orbit~\eqref{eq:Orbu}
is a finite set as the sequence $f^{(n)}(u)$, $n = 0,1,\ldots$,
is eventually periodic.  Let $T_u = \# \Orb_{f} (u)$ 
be the size of the orbit. 

We now show that if a segment of an orbit of length $N \le T_u$ 
has a large intersection with $\K$ then there is an
iterate of $f$ which is defined over $\K$.

We note that the argument of this section works for any fields
$\K \subseteq \F$, not necessary finite fields.  

\begin{thm} \label{thm:Subfield} Let $f\in \F[X]$ be of degree $d \ge 1$ and let
 $u \in \F$. Assume that for some real   $\eta >0$
and an integer $N \le T_u$ 
we have 
$$
\#  \{0 \le n < N~:~f^{(n)}(u)\in \K\} \ge c(d)\frac{N}{\log N} +1 , 
$$
where $c(d) = 2 \log (4d)$. 
Then for some integer $k$ we have 
$f^{(k)}(X)\in \K[X]$. 
\end{thm}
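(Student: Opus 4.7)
Write $S = \{0 \le n < N : f^{(n)}(u) \in \K\}$ and $L = \#S$, so the hypothesis reads $L - 1 \ge c(d) N / \log N$. My plan is a two-step reduction: a Lagrange interpolation observation converts ``many $\K$-points of $f^{(k)}$ land in $\K$'' into ``$f^{(k)} \in \K[X]$,'' and then a gap-counting argument on $S$ produces such a $k$.

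\emph{Interpolation step.} For $k \ge 1$ let $P_k = \{x \in \K : f^{(k)}(x) \in \K\}$. I claim that $\#P_k \ge d^k + 1$ already implies $f^{(k)} \in \K[X]$: the Lagrange interpolant through any $d^k + 1$ pairs $(x, f^{(k)}(x))$ with $x \in P_k$ lies in $\K[X]$ and has degree $\le d^k$, so it must coincide with $f^{(k)}$, whose degree is exactly $d^k$.

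\emph{Counting step.} Order $S$ as $n_1 < \cdots < n_L$ and let $g_i = n_{i+1} - n_i$, so $\sum_{i=1}^{L-1} g_i \le N - 1$. For each $i$ the element $x_i = f^{(n_i)}(u) \in \K$ satisfies $f^{(g_i)}(x_i) = f^{(n_{i+1})}(u) \in \K$, and since $N \le T_u$ the iterates $f^{(n)}(u)$ are distinct on $[0,N)$, so the $x_i$ are pairwise distinct. Setting $m_k = \#\{i : g_i = k\}$, this gives $\#P_k \ge m_k$, so it suffices to find $k$ with $m_k \ge d^k + 1$. Suppose, for contradiction, $m_k \le d^k$ for every $k \ge 1$. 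For any $K \ge 1$,
$$\sum_{k=1}^{K} m_k \le \sum_{k=1}^K d^k \le 2 d^K \quad (d \ge 2), \qquad \sum_{k > K} m_k \le \frac{1}{K+1}\sum_{k > K} k\, m_k \le \frac{N}{K+1},$$
hence $L - 1 \le 2 d^K + N/K$. Choosing $K \approx (\log N - \log\log N)/\log d$ makes $d^K \le N/\log N$ and $N/K \sim N\log d / \log N$, so $L - 1 \le 2(1 + \log d) N/\log N$ up to lower order terms; this contradicts $L - 1 \ge 2\log(4d)\, N/\log N$, the slack coming from $\log 4 > 1$. For $N$ so small that $c(d) N/\log N + 1 > N$, the hypothesis is vacuous and there is nothing to prove.

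The main obstacle is the counting step, specifically the precise matching of constants. All the wiggle room comes from the single numerical inequality $\log 4 > 1$, which is exactly why $c(d)$ has the form $2\log(4d)$ rather than $2\log d$. The degenerate case $d = 1$ is even easier: $m_k \le 1$ forces all gaps to be distinct, and then $L(L-1)/2 \le \sum_{i} g_i \le N$ gives $L = O(\sqrt N)$, which already contradicts the hypothesis.
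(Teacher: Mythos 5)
Your proposal is correct and follows essentially the same route as the paper: gap statistics on the return times, a pigeonhole step producing a gap length $k$ that occurs more than $d^k$ times, and Lagrange interpolation to conclude $f^{(k)}\in\K[X]$ (the paper merely runs the pigeonhole the other way, locating a popular gap $k\le \lfloor 2N/(M-1)\rfloor$ rather than summing the assumed bounds $m_k\le d^k$). The one step you left loose, absorbing the lower-order terms into the slack $2(\log 4-1)$, does close: since $\log N/(\log N-\log\log N)\le e/(e-1)<1.6$ for all $N\ge 2$, your choice of $K$ gives $L-1\le (2+1.6\log d)N/\log N<(2\log 4+2\log d)N/\log N$ whenever $K\ge 1$, which holds exactly when the hypothesis is not vacuous, i.e.\ for $N\ge(4d)^2$.
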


\begin{proof} 
Let  $1\leq{n_1}< \ldots < n_M\leq{N}$ be  all values with
 the property that $f^{(n_i)}(u)\in \K$.
We denote by $A(h)$ the number of $i=1, \ldots, M-1$ 
with $n_{i+1} - n_i = h$. 
Clearly 
$$
\sum_{h=1}^{N} A(h) = M-1 \mand
\sum_{h=1}^{N} A(h)h = n_{M}- n_1 \le N.
$$
Thus for any integer $H \ge 1$ we have 
\begin{equation*}
\begin{split}
\sum_{h=1}^{H} & A(h) = M-1 - \sum_{h=H+1}^N A(h) \\
&\ge   M-1 - (H+1)^{-1}\sum_{h=H+1}^N A(h)h 
\ge  M-1 - (H+1)^{-1}N.
\end{split}
\end{equation*}

Hence there exists $k \in \{1, \ldots, H\}$ 
with 
\begin{equation}
\label{eq:A(h) H}
A(k) \ge H^{-1} \(M-1  -  (H+1)^{-1}N\). 
\end{equation}
We now set 
$$
H =\fl{\frac{2N}{(M-1)}}. 
$$ 
Clearly $H \ge 1$.
Then 
$$
H^{-1} \(M-1  -  (H+1)^{-1}N\)\ge \frac{M-1}{2H} \ge \frac{N}{H(H+1)} 
$$
and we derive from~\eqref{eq:A(h) H} that 
\begin{equation}
\label{eq:A(h)}
A(k) \ge \frac{N}{H(H+1)} .
\end{equation}
Assume that $d^k \ge A(k)$. Then the inequality~\eqref{eq:A(h)}
implies that 
$$
d^H \ge d^k\ge  \frac{N}{H(H+1)} .
$$
Since $H <H+1 \le 2^H$ for $H\ge 1$, we derive
$$
(4d)^H > H(H+1)d^H  \ge N
$$
which in turn implies that 
$$
\frac{2N}{(M-1)} \geq H  > \frac{\log N}{\log (4d)}
$$
which contradicts our assumption on the frequency 
of orbit elements that belong to $\K$.

Therefore,  $d^k < A(k)$. 

Let $\cJ$ be the set of $j \in \{0, \ldots, M-1\}$ 
with $n_{j+1} - n_j = k$. 
Then we have 
$$
f^{(n_j)}(u)\in \K 
\mand f^{(n_{j+1})}(u) = f^{(k)}\(f^{(n_j)}(u)\) \in \K . 
$$
Since 
$$
\deg f^{(k)} = d^k <  A(k) = \# \cJ
$$
we now see that $f^{(k)}(w) \in \K$ for more than $\deg f^{(k)}$ 
elements $w \in \K$. 
Then by Lagrange interpolation we have
$f^{(k)}(X)\in \K[X]$, which concludes the proof.
\end{proof}

\subsection{Polynomials orbits in affine subspaces} 

As before we  denote by $p$ the characteristic of $\F = \F_{q^r}$.
%For some $a \in \F$, we write $\A = a + \cL$ where $\cL$ 
%is a linear  subspace  of $\F$. 

\begin{thm} \label{thm:PolySubspace} Let $f\in \F[X]$ be of degree $d=\deg f$ 
with $p> d \ge 2$ and let $\A \subseteq \F$ be an  affine subspace of dimension $s$ over $\K$
such that for any subfield $\G \subseteq \F$ and any $a\in \F$ we have
$$
\#\(\cL \cap a\G  \) \leq  \max{\left\{(\#\G)^{1/2},\frac{q^{s(1-\rho_d)}}{8}\right\}}, 
$$ 
where $\A = b + \cL$ for some $b\in \F$ and a linear subspace $\cL \subseteq \F$. 
Then 
$$
\#\(\A \cap f(\A)\) \ll  q^{s(1-\kappa_d)}, 
$$
where 
$$\kappa_d = \frac{\eta_d}{1+\eta_d} \mand \rho_d = \eta_d+\vartheta_d -  \eta_d\vartheta_d, 
$$
and  $\eta_d$ and $\vartheta_d$ are   as in Theorem~\ref{thm:BT-arbF}.
\end{thm}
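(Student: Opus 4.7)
The plan is to bound $\#(\A\cap f(\A))$ via the pre-image set
$$
\cB = \{a\in \A : f(a)\in \A\} \subseteq \A.
$$
One checks immediately that $f(\cB)=\A\cap f(\A)$, so $\#(\A\cap f(\A))\leq \#\cB$, and the task reduces to showing $\#\cB \ll q^{s(1-\kappa_d)}$.

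The key observation is that the containment $\cB \subseteq \A = b+\cL$ forces every sumset relevant to Theorem~\ref{thm:BT-arbF} to lie inside the linear subspace $\cL$. Indeed, $\cB - \cB \subseteq \cL$, and since the eight-fold expression $\cB+\cB+\cB+\cB-\cB-\cB-\cB-\cB$ balances four copies of $b$ against four copies of $-b$, it also lies in $\cL$. Likewise $f(\cB)\subseteq \A$ gives $f(\cB)-f(\cB)\subseteq \cL$. Consequently, both sets in the maximum appearing in Theorem~\ref{thm:BT-arbF} are bounded above by $\#\cL = q^s$.

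I then argue by a dichotomy on $\#\cB$. If $\#\cB < q^{s(1-\kappa_d)}$ the conclusion is immediate. Otherwise, I would verify that $\cB$ inherits the subfield-intersection hypothesis of Theorem~\ref{thm:BT-arbF} with parameter $\vartheta_d$: using $\cB-\cB\subseteq \cL$ and the hypothesis on $\cL$,
$$
\#((\cB-\cB)\cap a\G) \leq \#(\cL\cap a\G) \leq \max\left\{(\#\G)^{1/2},\frac{q^{s(1-\rho_d)}}{8}\right\}.
$$
The definitions $\rho_d = \eta_d+\vartheta_d-\eta_d\vartheta_d$ and $\kappa_d = \eta_d/(1+\eta_d)$ are calibrated exactly so that $q^{s(1-\rho_d)}\leq (\#\cB)^{1-\vartheta_d}$ whenever $\#\cB \geq q^{s(1-\kappa_d)}$; this reduces to the elementary inequality $\vartheta_d+\kappa_d(1-\vartheta_d) \leq \rho_d$, which holds because $\kappa_d \leq \eta_d$. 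Applying Theorem~\ref{thm:BT-arbF} to $\cB$ then yields $c_d(\#\cB)^{1+\eta_d}\leq q^s$, whence $\#\cB \ll q^{s/(1+\eta_d)} = q^{s(1-\kappa_d)}$, using $(1+\eta_d)(1-\kappa_d)=1$.

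The only delicate point is the exponent bookkeeping: the constants $\rho_d$ and $\kappa_d$ in the statement are chosen precisely so the subfield hypothesis on $\cL$ propagates to a usable hypothesis on $\cB-\cB$ at the threshold one needs for a contradiction. Once these two relations are in place, the argument is essentially a one-line application of Theorem~\ref{thm:BT-arbF} to the pre-image set $\cB$, and I anticipate no further obstacle.
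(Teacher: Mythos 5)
Your proposal is correct and follows essentially the same route as the paper: both apply Theorem~\ref{thm:BT-arbF} to a preimage set inside $\A$ whose image also lies in $\A$, so that the eight-fold sumset and $f(\cdot)-f(\cdot)$ are both trapped in $\cL$, and both verify the subfield hypothesis via $\cL$ using the calibration $1-\rho_d=(1-\eta_d)(1-\vartheta_d)$. The only cosmetic differences are that the paper uses a transversal $\cA$ with $\#\cA=\#(\A\cap f(\A))$ rather than the full preimage set $\cB$, and runs the dichotomy at the threshold $q^{s(1-\eta_d)}$ instead of $q^{s(1-\kappa_d)}$; both choices lead to the same conclusion.
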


\begin{proof}
Let $\cS = \A \cap f(\A)$. Now, for each $s \in \cS$ we choose an element $a\in \A$ with $f(a) = s$. 
Let $\cA$ be this set,  
so that $\# \cS = \# \cA$.

It is obvious that   
$$
\cA+\cA+\cA+\cA-\cA-\cA-\cA-\cA \subseteq \cL 
$$
and also
$$
f(\cA) - f(\cA) \subseteq \cS - \cS \subseteq \cL .
$$
If $\# \cA <  q^{s(1-\eta_d)}$ there is nothing to prove.
Otherwise 
$$
\#\cA^{1-\vartheta_d} \ge  q^{s(1-\eta_d)(1-\vartheta_d)} = \#\cL^{1-\rho_d}
$$
Hence,   since $\cA - \cA \subseteq \cL$  we have
\begin{equation*}
\begin{split}
\#\((\cA - \cA)\cap  a\G ) \)& \leq
\#\(\cL \cap  a\G  \) \le \max{\left\{(\#\G)^{1/2},\frac{\#\cL^{1-\rho_d}}{8}\right\}}\\
& \le \max{\left\{(\#\G)^{1/2},\frac{\#\cA^{1-\vartheta_d}}{8}\right\}}
\end{split}
\end{equation*}
for any $a\in \F$. Therefore
Theorem~\ref{thm:BT-arbF} applies to the set $\cA$ and implies that 
$$
\# \cL \gg \(\# \cA\)^{1+\eta_d}
$$
from which  we  immediately derive the result. 
\end{proof}

\begin{cor} \label{cor:OrbSubspace} Let $f\in \F[X]$ be of degree $d=\deg f$ 
with $p> d \ge 2$. Let  $\A \subseteq \F$
be an affine subspace of dimension $s$ over $\K$ 
such that for any subfield $\G \subseteq \F$ and any $a\in \F$  we have 
$$
\#\(\cL \cap a\G  \) \leq  \max{\left\{(\#\G)^{1/2},\frac{q^{s(1-\rho_d)}}{8}\right\}}, 
$$ 
where $\A = b + \cL$ for some $b\in \F$ and a linear subspace $\cL \subseteq \F$. 
If for some  $u \in \F$ and an integer $N$ with 
$2 \le N \le T_u$ 
we have 
$$
f^{(n)}(u)\in \A, \qquad n =0, \ldots, N-1, 
$$
then 
$$
q^s \gg N^{1+\eta_d},
$$
where $\eta_d$ and $\rho_d$ are as in 
Theorems~\ref{thm:BT-arbF} and~\ref{thm:PolySubspace},
respectively.
\end{cor}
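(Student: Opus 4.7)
The plan is to reduce the statement to a direct application of Theorem~\ref{thm:PolySubspace}. The hypothesis on $\cL$ in Corollary~\ref{cor:OrbSubspace} is identical to the one in Theorem~\ref{thm:PolySubspace}, so the upper bound $\#(\A \cap f(\A)) \ll q^{s(1-\kappa_d)}$ is available to us for free; the only work is to produce a matching lower bound on $\#(\A \cap f(\A))$ from the orbit hypothesis.

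First I would observe the following. For each $n \in \{0, \ldots, N-2\}$, the point $f^{(n)}(u)$ lies in $\A$ by assumption, and hence $f^{(n+1)}(u) = f(f^{(n)}(u))$ lies in $f(\A)$. On the other hand $f^{(n+1)}(u) \in \A$ as well, again by assumption (since $n+1 \le N-1$). Thus the $N-1$ points
$$
f^{(1)}(u),\ f^{(2)}(u),\ \ldots,\ f^{(N-1)}(u)
$$
all lie in $\A \cap f(\A)$. Since $N \le T_u$, the iterates $f^{(0)}(u), \ldots, f^{(N-1)}(u)$ are pairwise distinct (they all belong to the orbit $\Orb_f(u)$ which has exactly $T_u$ elements, so the first $T_u$ iterates are forced to be distinct). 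In particular, the $N-1$ displayed points are distinct, giving
$$
\#\(\A \cap f(\A)\) \ge N-1.
$$

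Next I would invoke Theorem~\ref{thm:PolySubspace}, whose hypotheses are exactly those assumed here, to obtain
$$
N-1 \le \#\(\A \cap f(\A)\) \ll q^{s(1-\kappa_d)}.
$$
Since $N \ge 2$, we have $N-1 \gg N$, and therefore $N \ll q^{s(1-\kappa_d)}$, i.e.\ $q^s \gg N^{1/(1-\kappa_d)}$. Finally I would compute the exponent using $\kappa_d = \eta_d/(1+\eta_d)$, so that
$$
\frac{1}{1-\kappa_d} = \frac{1}{1 - \eta_d/(1+\eta_d)} = 1 + \eta_d,
$$
which yields the claimed bound $q^s \gg N^{1+\eta_d}$.

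There is essentially no obstacle: the deduction is a clean corollary of Theorem~\ref{thm:PolySubspace}, and the only things to check are (i) the pairwise-distinctness of the iterates, which follows from $N \le T_u$, and (ii) the arithmetic identity $1/(1-\kappa_d) = 1+\eta_d$. The genuine content—namely, producing the upper bound on $\#(\A\cap f(\A))$—has already been done upstream via the polynomial sum-product estimate of Theorem~\ref{thm:BT-arbF}.
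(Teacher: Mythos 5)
Your proposal is correct and follows exactly the paper's route: the paper also takes $\cR=\{f^{(n)}(u):n=1,\ldots,N-1\}\subseteq \A\cap f(\A)$ and invokes Theorem~\ref{thm:PolySubspace}. You have merely written out the details the paper leaves implicit (the distinctness of the iterates from $N\le T_u$ and the exponent computation $1/(1-\kappa_d)=1+\eta_d$), and both of these checks are accurate.
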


\begin{proof} Let $\cR = \{f^{(n)}(u)~:~n =1, \ldots, N-1\}$. 
Then clearly,  under the condition of the theorem, we have 
$\cR \subseteq  \A \cap f(\A)$. Using Theorem~\ref{thm:PolySubspace} we derive the result. 
\end{proof}

\section{Comments}

It is certainly interesting to obtain a multiplicative 
analogue of Theorem~\ref{thm:BT-arbF} for the sets
$\cA\cdot \cA$ and $f(\cA) \cdot f(\cA)$ (and their multifold
analogues).  A result of this type  can be used to study the 
distribution of polynomial orbits in subgroups. We note that even 
over prime fields this question is still widely open, see~\cite{Shp1}.
It is related to the aforementioned open problem of estimating 
the size of the intersection $f\(\cG\)\cap \cH$ for two multiplicative 
subgroups $\cG, \cH \subseteq \F^*$. The case when $\cG = \cH$ is of 
direct relevance to studying orbits of dynamical systems in subgroups. 
It seems plausible that the 
method of Heath-Brown and Konyagin~\cite{HBK}, that has recently been
advanced by Shkredov~\cite{Shkr1,Shkr2}, is  able to yield  such results
over prime fields.

Studying rational functions instead of polynomials is
an interesting direction as well. 

The methods of proofs of Theorems~\ref{thm:Subfield} and~\ref{thm:PolySubspace}  
do not seem to 
extend to multivariate polynomials and it is very desirable to find
an alternative approach.

\section*{Acknowledgements}

The authors are grateful to Boris Bukh for helpful discussions and to Mike Zieve for information about 
his work in progress on polynomial orbits in subfields over fields of 
characteristic zero (however both the technique used 
and actually the results seem to be of a different nature).  

During the preparation of this paper
Oliver Roche-Newton was supported by the EPSRC Doctoral Prize Scheme,
Grant~EP/K503125/1 and part of this research was performed while he was visiting the Institute for Pure and Applied Mathematics (IPAM), which is supported by the National Science Foundation; Igor Shparlinski was supported by the   
Australian Research Council, 
Grants~DP130100237 and~DP140100118.

\end{document}